\theoremstyle{plain}
\newtheorem{theorem}{Theorem}[section]
\newtheorem{proposition}[theorem]{Proposition}
\theoremstyle{plain} %% This is the default
\newcommand{\Rmc}{\mathbb{R}^+_0}
\newcommand{\R}{\mathbb{R}}
\author{Sebasti\'an Herrero}
\address{Instituto de Matem\'aticas, Pontificia Universidad Cat\'olica de Valpara\'iso, Blanco Viel 596, Cerro Bar\'on, Valpara\'iso,
Chile.}
\email{sebastian.herrero.m@gmail.com}
\author{Nelda Jaque}
\address{Departamento de Matem\'aticas, 
Facultad de Ciencias,
Universidad de Chile, 
Las Palmeras 3425, \~Nu\~noa, Santiago, Chile.}
\email{njaquetamblay@gmail.com}
\title{On two notions of expansiveness for continuous semiflows}
\begin{document}

\maketitle

\begin{abstract}
We study two notions of expansiveness for continuous semiflows: expansiveness in the sense of Alves, Carvalho and Siqueira (2017), and an adaptation of positive expansiveness in the sense of Artigue (2014). We prove that if $X$ is a metric space and $\phi$ is an expansive semiflow on $X$ according to the first definition, then the semiflow $\phi$ is trivial and the space $X$ is uniformly discrete. In particular, if $X$ is compact then it is finite. With respect to the second definition, we prove that if $X$ is a compact metric space and $\phi$ is a positive expansive semiflow on it, then $X$ is a union of at most finitely many closed orbits,  \emph{unbranched tails} and isolated singularities.
\end{abstract}

\section{Introduction}\label{sect-1}

Let $(X,d)$  denote a metric space. A continuous flow on $X$ is a continuous function $\Phi: X\times \mathbb{R}\to X$ %a continuous semiflow. This means that $\phi$ is a continuous function 
satisfying the following two conditions:
\begin{enumerate}
    \item[($i$)] $\Phi(x,t+s)=\Phi(\Phi(x,t),s)$ for all $t,s$ in $\R$ and all $x$ in $X$; 
    \item[($ii$)] $\Phi(x,0)=x$ for all $x\in X$.
\end{enumerate}
Analogously, a continuous semiflow on $X$ is a continuous function $\phi:~X\times\Rmc\to X$ satisfying  properties corresponding to $(i)$ and $(ii)$ above with $\R$ replaced by $\Rmc=[0,\infty[$. If $\Phi$ (resp.~$\phi$) is a flow (resp. semiflow) on $X$ then we use the notation $\Phi_t(x)=\Phi(x,t)$ (resp. $\phi_t(x)=\phi(x,t)$) for every~$t$ in~$\R$ (resp.~$\Rmc$) and~$x$ in~$X$. We say that~$\Phi$ (resp.~$\phi$) is \emph{trivial} if for every~$t$ in~$\R$ (resp.~$\Rmc$) we have that~$\Phi_t$ (resp.~$\phi_t$) is the identity map.

Clearly, every continuous flow $\Phi$ on $X$ defines, by restricting $t$ to $\Rmc$,  a continuous semiflow $\phi$ on the same space. %If $X$ supports non trivial flows, then 
On the other hand, it is easy to construct spaces admitting continuous semiflows that are not % that the procedure of restriction of flows does not give all possible semiflows on $X$. Namely, there exist continuous semiflows $\phi$ that are not 
the restriction to $\Rmc$ of any continuous flow.  This motivates the study of continuous semiflows and its comparison with the theory of continuous flows. The theory of continuous semiflows is also strongly motivated by the qualitative theory of differential equations (see, e.g., \cite{Sap81}).

In this paper we focus exclusively on the concept of expansiveness. First, we recall Bowen and Walters \cite{BW72} definition of  expansive flow.  Consider
$$C:=\{s:\mathbb{R}\to \mathbb{R}: s\text{ is continuous and }s(0)=0\}.$$
A continuous flow $\Phi:X\times \R\to X$ is called \begin{it}expansive\end{it} if for every $\varepsilon>0$ there exists $\delta>0$ such that for every pair of points $x,y$ in $X$ and every function $s$ in $C$ such that $d(\phi_t(x),\phi_{s(t)}(y))<\delta$ for all $t$ in $\mathbb{R}$, there exists  $t_0$ in $]-\varepsilon,\varepsilon[$ such that $y=\phi_{t_0}(x)$. This definition of expansiveness coincides with the notion of $\mathscr{F}$-expansiveness introduced by Keynes and Sears \cite{KS79} when $\mathscr{F}=C$. 

Inspired by Bowen and Walters' definition of expansiveness, Alves, Carvalho and Siqueira give in \cite[Section 1.2]{ACS17} the following definition. A continuous semiflow $\phi: X\times\Rmc\to X$ is called \begin{it}expansive\end{it} if it satisfies the following property:  For every $\varepsilon>0$ there exists $\delta>0$ such that for every pair of points $x,y$ in $X$ and every~$s$ in 
$$C^+:=\{s:\Rmc\to \Rmc:s\mbox{ is continuous and } s(0)=0\}$$
such that $d(\phi_t(x),\phi_{s(t)}(y))<\delta$ for all $t$ in $\Rmc$, there exists $t_0$ in $[0,\varepsilon[$ such that $y=\phi_{t_0}(x)$. This property is extremely  restrictive. Indeed, the first main result of this paper shows that, even when $C^+$ is replaced by 
$$H^+:=\{s\in C^+: s \text{ is surjective and strictly increasing}\}$$
(giving\begin{it} a priori\end{it} a more general definition), the only semiflow on $X$ satisfying such condition is the trivial one, and the expansiveness of this semiflow imposes constraints on the topology of $X$. 

\begin{theorem}\label{Theorem_1}
Let $(X,d)$ be a metric space. Assume that there exists a continuous semiflow $\phi:X\times \Rmc \to X$ satisfying the following property: For every $\varepsilon>0$ there exists $\delta>0$ such that for every pair of points $x,y$ in $X$ and every~$s$ in $H^+$ such that $d(\phi_t(x),\phi_{s(t)}(y))<\delta$ for all $t$ in $\Rmc$, there exists $t_0$ in $[0,\varepsilon[$ such that $y=\phi_{t_0}(x)$. Then, the following properties hold:  
\begin{enumerate}
\item[$(i)$] For every $t$ in $\Rmc$ we have that $\phi_t$ is the identity map on $X$.
\item[$(ii)$] The space $X$ is uniformly discrete. Namely, there exists $\rho>0$ such that $d(x,y)\geq \rho$ for every pair of points $x,y$ in $X$ with $x\neq y$. 
\end{enumerate}
In particular, if $X$ is compact then it is a finite set.
\end{theorem}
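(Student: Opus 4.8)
The plan is to reduce the whole statement to part $(i)$, and then to prove $(i)$ by a reparametrization argument that exploits the one essential asymmetry of a semiflow: one cannot run it backwards.

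First I would dispose of $(ii)$ and the final sentence assuming $(i)$. If $\phi_t$ is the identity for every $t$ in $\Rmc$, then for any $x,y$ and any $s$ in $H^+$ the quantity $d(\phi_t(x),\phi_{s(t)}(y))$ is just $d(x,y)$, and the conclusion ``$y=\phi_{t_0}(x)$'' becomes ``$y=x$''. Thus the hypothesis says precisely that there is some $\delta>0$ (the one attached to any fixed $\varepsilon$) with the property that $d(x,y)<\delta$ forces $x=y$, which is exactly uniform discreteness. Compactness together with uniform discreteness then yields finiteness by a routine covering argument, since the $\rho/2$-balls about distinct points are disjoint.

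The heart of the matter is $(i)$: every point is a singularity. Suppose not, so that $\phi_\tau(x)\neq x$ for some $x$ and some $\tau>0$. Fix $\varepsilon>0$ and let $\delta>0$ be given by the hypothesis. Write $z=\phi_\tau(x)$ and, for a small parameter $b$ in $]0,\tau[$, set $y=\phi_{\tau-b}(x)$, so that $\phi_b(y)=z$; thus $y$ sits ``one step behind'' $z$. I would then compare the trajectory of $z$ with that of $y$. Since $\phi_t(z)=\phi_{t+b}(y)$, I choose $s$ in $H^+$ equal to $t\mapsto t+b$ for $t$ large and interpolated affinely down to $s(0)=0$ on a fixed interval, say $s(t)=(1+b)t$ on $[0,1]$ and $s(t)=t+b$ for $t\geq 1$. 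Then the two trajectories coincide for $t\geq 1$ and have ``phase gap'' $(t+b)-s(t)=b(1-t)$ lying in $[0,b]$ on $[0,1]$, with all times involved confined to $[0,2]$. By uniform continuity of $u\mapsto\phi_u(y)$ on the fixed compact interval $[0,2]$, choosing $b$ small enough gives $d(\phi_t(z),\phi_{s(t)}(y))<\delta$ for all $t$ in $\Rmc$ (the instance $t=0$ reading $d(z,y)<\delta$).

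The expansiveness hypothesis then forces $y=\phi_{t_0}(z)$ for some $t_0$ in $[0,\varepsilon[$, that is, $\phi_{\tau-b}(x)=\phi_{\tau+t_0}(x)$: a coincidence between the two distinct times $\tau-b<\tau+t_0$ on the orbit of $x$. The main obstacle is to convert this coincidence into a genuine contradiction, and this is where the structure of the orbit must be analysed. If $t\mapsto\phi_t(x)$ is injective, the coincidence forces $-b=t_0\geq 0$, absurd. Otherwise some coincidence $\phi_a(x)=\phi_{a'}(x)$ with $a<a'$ makes the orbit periodic of period $a'-a$ from time $a$ onward; if the infimum of such periods is $0$ the orbit is eventually equal to a fixed point $w=\phi_U(x)$ with $U$ minimal, and applying the backward-shift argument at $z=w$ would force $\phi_{U-b}(x)=w$ and hence $\phi_t(x)=w$ for all $t\geq U-b$, contradicting minimality of $U$; if the infimum is a genuine period $p>0$, I would run the same backward-shift at a point $w$ of this periodic orbit with $y=\phi_{p-b}(w)$, having chosen $\varepsilon<p$ at the outset, so that the forced relation $\phi_{p-b}(w)=\phi_{t_0}(w)$ yields $t_0=p-b>\varepsilon$, again impossible. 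Hence no point moves, which is $(i)$. I expect the two delicate points to be the uniform-continuity estimate guaranteeing $\delta$-closeness for small $b$, and the reduction of the non-injective case to a bona fide periodic orbit of positive minimal period.
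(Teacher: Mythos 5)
Your proposal is correct in substance, and its engine is exactly the paper's: compare a point $z$ with a point $y$ sitting time $b$ behind it on the same orbit, via a piecewise-linear reparametrization $s$ that equals $t+b$ from some time on, so that the two trajectories are $\delta$-close for small $b$ and eventually coincide; expansiveness then forces a coincidence of two distinct times on one orbit. Where you genuinely diverge is the endgame. The paper avoids your trichotomy altogether by choosing $\varepsilon$ \emph{adaptively}: with $T_0$ the first time the orbit of $z$ hits $\phi_T(z)$, the map $t\mapsto \phi_t(z)$ is automatically injective on $[0,T_0]$; taking $\varepsilon=T_0/2$ and running the construction at $x=\phi_{T_0/4}(z)$, $y=\phi_{T_0/4-\delta_1}(z)$ keeps both coinciding times inside $[0,T_0]$, where injectivity kills them in one stroke, with no analysis of the global orbit structure. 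Your route instead splits into the three cases (injective, eventually fixed, eventually periodic with minimal period $p>0$) and defeats each separately; this works, but two points you flagged must indeed be supplied. First, the reduction of the non-injective case requires the standard lemma that all the eventual periods $a'-a$ are periods of a single continuous periodic extension of the tail of the orbit, whose period set is a closed subgroup of $\R$; hence the infimum is either $0$ (orbit eventually constant) or attained as a genuine minimal period $p$, with $t\mapsto\phi_t(w)$ injective on $[0,p[$. Second, your sequencing is off as literally written: $\varepsilon$ is fixed before $p$ is known, so you must determine the orbit type of $x$ first and only then choose $\varepsilon<p$ in the periodic case; since the trichotomy depends only on $x$ and not on $\varepsilon,\delta,b$, this reordering is harmless and is evidently what you intend by ``at the outset.'' One further small repair: since $y=\phi_{\tau-b}(x)$ itself depends on $b$, invoke uniform continuity of $u\mapsto\phi_u(x)$ on $[0,\tau+2]$ rather than of $u\mapsto\phi_u(y)$ on $[0,2]$ (the phase-gap estimate $b(1-t)\in[0,b]$ transfers verbatim). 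With these routine patches all three of your cases close correctly, and your derivations of $(ii)$ and of finiteness from $(i)$ coincide with the paper's. What the paper's device buys is economy (no periodicity lemma, no case split); what yours buys is a self-contained picture of exactly which orbit structures could obstruct the argument and why each one fails.
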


The second notion of expansiveness that we study in this paper is a symmetric version of the one given above. It is based on a definition given by Artigue in \cite[Section 2]{Art14} in the context of continuous flows without \emph{singularities}. Our definition is as follows: A continuous semiflow $\phi: X\times\Rmc\to X$ is called \begin{it}positive expansive\end{it} if for every $\varepsilon>0$ there exists $\delta>0$ such that for every pair of points $x,y$ in $X$ and every~$s$ in $H^+$ such that $d(\phi_t(x),\phi_{s(t)}(y))<\delta$ for all $t$ in $\Rmc$, there exists $t_0$ in $[0,\varepsilon[$ such that $y=\phi_{t_0}(x)$ or $x=\phi_{t_0}(y)$. 

In order to present our second main result, we introduce some definitions and notations. Given a space $X$, a point $x$ in $X$ and a semiflow $\phi: X\times\Rmc\to X$, the \emph{positive orbit} of $x$ under $\phi$ is
$$O_{\phi}^+(x)=\{\phi_t(x):t\in \Rmc\},$$
and the \emph{pre-orbit} of $x$ under $\phi$ is
$$O_{\phi}^-(x)=\{y\in X: \phi_t(y)=x \text{ for some }t \text{ in }\Rmc\}.$$
Define $\tau_x:O_{\phi}^-(x)\to \Rmc$ as the function
$$\tau_x(y):=\inf\{t\in \Rmc: \phi_t(y)=x\}.$$
The pre-orbit $O_{\phi}^-(x)$ is called \emph{unbranched} if the function $\tau_x$ is injective. \\
A point $x$ in $X$ is \emph{periodic} with respect to $\phi$ if there exists $t_0>0$ such that $\phi_{t_0}(x)=x$ and $O_{\phi}^+(x)\neq \{x\}$. In this case, the set $O_{\phi}^+(x)$ is a \emph{closed orbit} of $\phi$ and the \text{period} of $x$  is defined as
$$P:=\inf\{t\in\ ]0,\infty[:\phi_t(x)=x\}.$$
We also say that  $P$ is the period of $O_{\phi}^+(x)$. \\
A point $x$ in $X$ is called a \emph{singularity} (resp.~an \emph{end point}) of $\phi$ if $O_{\phi}^+(x)=O_{\phi}^-(x)=\{x\}$ (resp.~$O_{\phi}^+(x)=\{x\}$ and $O_{\phi}^-(x)\neq \{x\}$). If $x$ is an end point of $\phi$ then we call $O_{\phi}^-(x)$ the \emph{tail} of $x$. \\
The following examples illustrate the meaning of these concepts.

\bigskip

\noindent \begin{bf}Example 1. \end{bf}Consider the space $[0,1]$ with the usual euclidean metric, and the semiflow $\phi:[0,1]\times \Rmc\to [0,1]$ given  by $\phi_t(x)=\min\{x+t,1\}$ for $t$ in $\Rmc$ and $x$ in $[0,1]$. Then $\phi$ is a positive expansive semiflow with no singularities, one end point and no closed orbits. In this case the space consists of a single unbranched tail of $\phi$. \\

\noindent \begin{bf}Example 2. \end{bf}Consider again the space $[0,1]$ with the usual euclidean metric, and consider the semiflow $\varphi:[0,1]\times \Rmc\to [0,1]$ given  by $\varphi_t(x)=\min\{xe^t,1\}$ for $t$ in $\Rmc$ and $x$ in $[0,1]$. Then $0$ is a singularity and $O^-_{\varphi}(1)=\ ]0,1]$ is an unbranched tail of $\varphi$. By Theorem \ref{Theorem_2} below, we have that singularities of positive expansive semiflows are isolated. It follows that $\varphi$ is not  positive expansive. This is essentially due to the fact that points near $0$ move too slowly under the semiflow~$\varphi$. \\ 

\noindent \begin{bf}Example 3. \end{bf}Consider the cylinder
$$Y:=\left\{e^{2\pi i\theta}:\theta\in \mathbb{R}\right\}\times [0,1],$$
with the distance
$$d\left(\left(e^{2\pi i \theta_1},h_1\right),\left(e^{2\pi i \theta_2},h_2\right)\right):=|h_1-h_2|+\min\{|\theta_1-\theta_2+m|:m\in \mathbb{Z}\}.$$
Define $X$ as the subspace 
$$X:=\left\{\left(e^{2\pi i\theta},0\right):\theta\in \mathbb{R}\right\} \cup \left\{\left(e^{2\pi i \theta},e^{\theta}\right): \theta\leq 0 \right\}.$$
Consider the semiflow $\psi: X\times \Rmc \to X$ given for $t$ in $\Rmc$ by
$$\psi_t\left(e^{2\pi i \theta},h \right)=\left\{
\begin{array}{ll}
\left(e^{2\pi i (\theta+t)},he^t\right) & \text{ if }he^t\leq 1,\\
(1,1) & \text{ if }he^t> 1.
\end{array}
\right.$$
Then $\psi$ is a continuous semiflow on $X$. We claim that $\psi$ is positive expansive. Indeed, given $\varepsilon>0$ choose $\delta=\min \left\{\varepsilon,\frac{1}{4} \right\}$ and let $x_1=(e^{2\pi i \theta_1},h_1), x_1=(e^{2\pi i \theta_2},h_2)$ in $X$ and $s$ in $H^+$ be such that $d(\psi_t(x_1),\psi_{s(t)}(x_2))<\delta$ all $t$ in $\Rmc$. For every such $t$, let $m_t$ be an integer such that
$$|\theta_1+t-(\theta_2+s(t))+m_t|=\min\{|\theta_1+t-(\theta_2+s(t))+m|:m\in \mathbb{Z}\}.$$
Then
\begin{equation}\label{ec-t-s(t)}
|\theta_1+t-(\theta_2+s(t))+m_t|\leq d(\psi_t(x_1),\psi_{s(t)}(x_2))<\delta,
\end{equation}
hence
$$m_t\in \ ]\theta_2-\theta_1+s(t)-t-\delta,\theta_2-\theta_1+s(t)-t+\delta[$$
for all $t$. Since the above interval has length $2\delta<\frac{1}{2}$ and $s(t)-t$ varies continuously on $t$, it follows that $m_t$ is constant equal to $m_0$. Now, if $h_1=0$ then clearly $h_2=0$ and in this case we have $\psi_{\theta_1-\theta_2+m_0}(x_2)=x_1$ if $\theta_1-\theta_2+m_0\geq 0$, and $\psi_{\theta_2-\theta_1-m_0}(x_1)=x_2$ if $\theta_1-\theta_2+m_0<0$. Since $|\theta_1-\theta_2+m_0|<\delta\leq \varepsilon$, the desired property holds. Now, assume $h_1> 0$. Then $h_2>0$ and without loss of generality we can assume $h_i=e^{\theta_i}$ for $i=1,2$. Define
$$T:=\min \left\{|\theta_1|,s^{-1}(|\theta_2|) \right\}.$$
Note that $T$ is the first time $t$ when $\psi_t(x_1)$ or $\psi_{s(t)}(x_2)$ reaches $(1,1)$. If $T=|\theta_1|$, then $\psi_T(x_1)=(1,1)$ and
$$d((1,1),\psi_{s(T)}(x_2))=\left(1-e^{\theta_2+s(T)}\right)+|\theta_2+s(T)-m_0|<\delta.$$
In particular, $1-e^{\theta_2+s(T)}<\delta\leq \frac{1}{4}$ and $|\theta_2+s(T)-m_0|\leq \delta <\frac{1}{4}$. Since
$$-\frac{1}{2}<\log\left(\frac{3}{4}\right)\leq \theta_2+s(T)\leq 0,$$
we have $m_0=0$, thus by \eqref{ec-t-s(t)} with $t=0$ we get $|\theta_1-\theta_2|<\delta\leq \varepsilon$. Since $\psi_{\theta_1-\theta_2}(x_2)=x_1$ if $\theta_1-\theta_2\geq 0$, and $\psi_{\theta_2-\theta_1}(x_1)=x_2$ if $\theta_1-\theta_2< 0$, we get the desired result. The case $T=s^{-1}(|\theta_2|)$ is similar. This proves that $\psi$ is positive expansive.\\
Note that in this example $\psi$ has no singularities and
$$X=O_{\psi}^+((1,0)) \cup O_{\psi}^-((1,1)),$$
where $O_{\psi}^+((1,0))$ is a closed orbit of period $1$ and $O_{\psi}^-((1,1))$ is an unbranched tail. Note also that the tail $O_{\psi}^-((1,1))$ is open and dense in $X$.

\bigskip

We now present the second main result of this paper.

\begin{theorem}\label{Theorem_2}
Let $(X,d)$ be a compact metric space. Assume that $\phi:X\times \Rmc \to X$ is a continuous semiflow that is positive expansive. Then, $X$ is a union of at most finitely many closed orbits,  unbranched tails and isolated singularities.
\end{theorem}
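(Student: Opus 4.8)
The plan is to decompose $X$ according to the forward behaviour of each orbit, partitioning the points into three mutually exclusive types: the forward-fixed points (those with $O_{\phi}^+(x)=\{x\}$, i.e.\ singularities and end points), the periodic points, and the points whose forward orbit reaches an end point in finite time. I would first harvest the rigid local consequences of positive expansiveness together with the finiteness statements, and only then attack the global coverage statement, which I expect to be the genuine difficulty.

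A natural first step is that every forward-fixed point is isolated. If $p,q$ satisfy $O_{\phi}^+(p)=\{p\}$ and $O_{\phi}^+(q)=\{q\}$, then taking $s(t)=t\in H^+$ gives $d(\phi_t(p),\phi_{s(t)}(q))=d(p,q)$ for all $t$, so if $d(p,q)<\delta$ positive expansiveness forces $p\in O_{\phi}^+(q)$ or $q\in O_{\phi}^+(p)$, i.e.\ $p=q$. By compactness there are finitely many forward-fixed points, hence finitely many singularities, finitely many end points, and so finitely many tails. To see that each tail $O_{\phi}^-(e)$ is unbranched I would argue by contradiction: if $\tau_e$ is not injective, choose $y_1\neq y_2$ with a common value and let $T'$ be the first time their orbits coincide. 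Setting $a_n=\phi_{T'-1/n}(y_1)$ and $b_n=\phi_{T'-1/n}(y_2)$, these are distinct by minimality of $T'$, they agree for $t\geq 1/n$, and uniform continuity of $\phi$ on $X\times[0,1]$ makes $d(\phi_t(a_n),\phi_t(b_n))<\delta$ for all $t$ once $n$ is large. Positive expansiveness then yields $t_0\in[0,\varepsilon)$ with, say, $\phi_{T'-1/n}(y_2)=\phi_{t_0+T'-1/n}(y_1)$, and equating the remaining times needed to reach $e$ on the two sides (both are finite since $y_1,y_2\in O_{\phi}^-(e)$) forces $t_0=0$, whence $a_n=b_n$, contradicting minimality. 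The finiteness of the set of closed orbits should follow from a companion comparison: two closed orbits of periods $P_1,P_2$ that are Hausdorff-close can be compared through the reparametrization $s(t)=(P_2/P_1)t\in H^+$, and if they stay $\delta$-close this forces one to lie on the other, so they coincide; since distinct closed orbits are disjoint compacta, compactness of $X$ bounds their number.

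The hard part will be coverage: showing that a point which is neither a singularity nor periodic must reach an end point in finite time, and in particular that a non-periodic orbit is neither nontrivially recurrent nor able to flow into a closed orbit. Here the naive device of comparing an orbit with a small time-shift $\phi_\eta(x)$ of itself is useless, since the conclusion ``$\phi_\eta(x)=\phi_{t_0}(x)$'' is met trivially by $t_0=\eta$; the symmetric ``or'' in the definition is precisely what permits the nontrivial orbits of Examples 1 and 3. The plan is instead to study the $\omega$-limit set $\omega(x)$, nonempty, compact and invariant by compactness of $X$, and to prove a closing-type statement: from near-returns $\phi_{t_n}(x)\to z\in\omega(x)$ one manufactures two genuinely distinct orbit segments that shadow each other for all time after absorbing the return gap into a single reparametrization $s\in H^+$, so that positive expansiveness upgrades the near-return into an exact periodic relation; this should force every minimal subset of $\omega(x)$ to be a single closed orbit or a forward-fixed point, and, by the branching analysis above, forbid a non-periodic orbit from entering a closed orbit. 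Building that reparametrization absorbing the return gap while keeping the two segments uniformly $\delta$-close is the real obstacle, and it is the step where I expect the main work to concentrate. The final passage, from ``accumulates on a closed orbit or fixed point'' to ``reaches an end point in finite time,'' would exclude the Example~2 type of infinitely slow approach—exactly where positive expansiveness fails there—by extracting from expansiveness a uniform lower bound on the time a point must spend away from the finite forward-fixed set, guaranteeing actual arrival in finite time.
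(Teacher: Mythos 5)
Your opening steps are sound and coincide with the paper's: the finiteness and isolation of forward-fixed points via $s(t)=t$ is exactly the paper's first step in section 4, and your unbranchedness argument for tails is essentially Proposition \ref{general-positive-expansive}$(i)$. But the heart of the theorem --- that every point outside $X_{\mathrm{sing}}\cup X_{\mathrm{tail}}$ is periodic, and that the closed orbits are finite in number --- is precisely what your proposal leaves open. You flag the closing-type lemma as ``the real obstacle'' and give no construction for the reparametrization that is supposed to absorb the return gap, so the proof has a hole at its center. Moreover, your one concrete mechanism for finiteness of closed orbits fails: if two closed orbits of periods $P_1,P_2$ are Hausdorff-close as sets, it does not follow that $d(\phi_t(x),\phi_{(P_2/P_1)t}(y))<\delta$ for all $t$ for any choice of base points $x,y$ --- the flow may traverse the two orbits at very different local speeds, so the linearly reparametrized trajectories shear apart even though the orbit sets are close. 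Matching them would require constructing a genuinely nonlinear $s\in H^+$ by a projection or cross-section argument, which is the same difficulty you deferred.

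The paper avoids building a closing lemma altogether: it shows (Proposition \ref{general-positive-expansive}$(iv)$ and $(vi)$) that $Y=X\setminus(X_{\mathrm{sing}}\cup X_{\mathrm{tail}})$ is compact and positively invariant and that each $\phi_t$ is injective off the tails, forms $\widetilde{Y}=\bigcap_{t}\phi_t(Y)$, on which every $\phi_t$ is a homeomorphism, extends the semiflow to a flow there, and invokes Artigue's Theorem 4.2 --- the very result your closing lemma would be re-proving from scratch --- to get that $\widetilde{Y}$ is a finite union of closed orbits. The remaining step, $Y=\widetilde{Y}$, is then closed by exactly the device you dismissed as ``useless'': comparing the orbit of $y$ with its own time-shift. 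Your objection is valid only for a small shift $\eta<\varepsilon$, where $t_0=\eta$ is an admissible trivial solution. The paper instead uses $L^+(y)$ (connected, contained in $\widetilde{Y}$, hence a single closed orbit of period $P$), takes the shift equal to $P$ and sets $\varepsilon'=P$, so $d(\phi_t(y),\phi_{P+t}(y))<\delta'$ for all large $t$ and the trivial solution $t_0=P$ falls just outside $[0,P[$; any admissible $t_0=\mu$ then yields $\phi_{\mu+T}(y)=\phi_{P+T}(y)$ with $\mu<P$, or $\phi_{\mu+P+T}(y)=\phi_T(y)$, forcing genuine periodicity of a forward iterate, which Proposition \ref{general-positive-expansive}$(v)$ pulls back to $y$ itself. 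So the self-comparison trick, calibrated with the right shift and the right $\varepsilon$, is what completes the argument; without it, or without quoting Artigue's theorem for the flow on $\widetilde{Y}$, your plan remains a program rather than a proof.
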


Theorems \ref{Theorem_1} and \ref{Theorem_2} can be seen as part of a collection of results showing that certain expansive dynamical systems are trivial. For instance, a theorem first proved by Schwartzman \cite{Sch53} states that \begin{it}positively expansive homeomorphisms\end{it} on compact metric spaces are trivial; see also \cite{RW04,CK06,MS10} for definitions and further references. In the context of continuous dynamical systems, Artigue \cite{Art14} has shown that \begin{it}positive expansive flows\end{it} on compact metric spaces consist of at most finitely many singularities and closed orbits. We also cite \cite{MS19} for related finiteness results for certain \begin{it}expansive group actions\end{it}. 

\subsection{Organization} The proof of Theorem \ref{Theorem_1} is given in section \ref{sec-th-1} and it is based on elementary properties of continuous functions on metric spaces. In section \ref{sec-general-expansive} we prove  some general properties of positive expansive semiflows on not necessarily compact metric spaces.  The proof of Theorem \ref{Theorem_2}  is based on these general properties, together with Artigue's description of positive expansive flows without singularities on compact metric spaces    (\cite[Theorem 4.2]{Art14}).  This is  presented in section \ref{sec-th-2}.

\subsection{Comments} During the preparation of this paper the authors learned about the related work of Lee, Lee and Morales \cite{LLM20}. One of their results states that if $\phi$ is an expansive semiflow on a compact metric space $X$, then $X$ is a union of at most finitely many singularities and  closed orbits. Such result can be seen as a weak form of Theorem \ref{Theorem_1}. Lee, Lee and Morales also study an alternative definition of expansiveness (called \emph{eventual expansiveness}) that allows the existence of branched tails, and prove an analogue of Theorem \ref{Theorem_2} for such semiflows under certain additional assumptions.

\section{Proof of Theorem \ref{Theorem_1}}\label{sec-th-1}

Let $(X,d)$ be a metric space and let $\phi$ be a continuous semiflow on $X$ satisfying the hypothesis of Theorem \ref{Theorem_1}. By contradiction, assume that there exists $T$ in $\Rmc$ such that $\phi_T$ is not the identity map on $X$. Since $\phi_0$ is the identity map on $X$, we have that $T$ is strictly positive. By definition of $T$ there exists a point $z$ in $X$ such that $\phi_{T}(z)\neq z$. Define $T_0:=\tau_{\phi_{T}(z)}(z)$. %\inf\{t\in \Rmc: \phi_t(z)=\phi_T(z)\}.$$
Then $T_0>0$ and $\phi_{T_0}(z)=\phi_T(z)\neq z$.  Since $\phi$ is a semiflow, for every $s$ in $[0,T_0]$ we have that $\tau_{\phi_{T}(z)}(\phi_s(z))=T_0-s$. 
%$$\inf\{t\in \Rmc: \phi_t(\phi_s(z))=\phi_{T_0}(z)\}=T_0-s.$$
This implies that the function $\phi(\ast,z):[0,T_0]\to X$ given by $t\mapsto  \phi(t,z)$ is injective. Choose $\varepsilon:=\frac{T_0}{2}$ and let $\delta$ be the corresponding constant given in the statement of the theorem. Define $T_1:=\frac{T_0}{4}$ and $x:=\phi_{T_1}(z)$. We observe that %$d(z,x)>0$ and 
the function
$$I:[0,T_0] \to \Rmc, I(t):=d(\phi_t(z),x)$$
is continuous and  $I(T_1)=0$. 
It follows from the continuity of $I$ that there exists $\delta_1$ in $]0,T_1[$ such that $I(t)<\frac{\delta}{2}$ for all $t$ in $[T_1-\delta_1,T_1+\delta_1]$. Put $y:=\phi_{T_1-\delta_1}(z)$ and define $s:\Rmc\to\Rmc$ by
$$s(t)=\left\{\begin{array}{ll}
%\left(\frac{T_1+\delta_1}{\delta_1}\right)
2t & \mbox{ if }t\leq \delta_1,\\
t+\delta_1 & \mbox{ if }t> \delta_1.
\end{array}
\right.$$  
Note that $s$ is in $H^+$. Now, for every $t$ in $[0,\delta_1]$ we have that $T_1+t$ and $s(t)+T_1-\delta_1$ are both in $[T_1-\delta_1,T_1+\delta_1]$, hence 
$$d(\phi_{t}(x),x)=d(\phi_{t}(\phi_{T_1}(z)),x)=I(T_1+t)<\frac{\delta}{2}$$
and
$$d(\phi_{s(t)}(y),x)=d(\phi_{s(t)}(\phi_{T_1-\delta_1}(z)),x)=I(s(t)+T_1-\delta_1)<\frac{\delta}{2}.$$ 
This implies, by the triangular inequality, that $d(\phi_t(x),\phi_{s(t)}(y))<\delta$ for all $t$ in $[0,\delta_1]$. On the other hand, for $t$ in $]\delta_1,\infty[$ we have 
$$\phi_t(x)=\phi_{t+T_1}(z)=\phi_{t+\delta_1}(\phi_{T_1-\delta_1}(z))=\phi_{s(t)}(y),$$
hence $d(\phi_t(x),\phi_{s(t)}(y))=0$. We conclude that $d(\phi_t(x),\phi_{s(t)}(y))<\delta$ for all $t$ in $\Rmc$. By hypothesis, we get that $y=\phi_{t_0}(x)$ for some $t_0$ in $[0,\varepsilon[$, hence
$$\phi_{T_1-\delta_1}(z)=y=\phi_{t_0}(x)=\phi_{t_0+T_1}(z),$$
with $T_1-\delta_1$ and $t_0+T_1$ both in $[0,T_0]$. This contradicts the fact that $\phi(\ast,z):[0,T_0]\to X$ is injective since $T_1-\delta_1<t_0+T_1$. This proves $(i)$.\\
In order to prove $(ii)$, choose $\varepsilon':=1$ and let $\delta'$ be the corresponding constant given in the statement of the theorem. If $(ii)$ does not hold, then there exist two distinct points $x,y$ in $X$ with $d(x,y)<\delta'$.  Choosing any $s$ in $H^+$ we have, by $(i)$, that $d(\phi_t(x),\phi_{s(t)}(y))=d(x,y)<\delta'$ for all $t$ in $\Rmc$, hence there exists $t_0$ in $[0,1[$ with $y=\phi_{t_0}(x)$. This implies that $y=x$, giving a contradiction. This proves $(ii)$.\\
The last statement of the theorem follows directly from $(ii)$, since every compact and discrete space is finite. This completes the proof of Theorem \ref{Theorem_1}.

\section{General properties of positive expansive semiflows}\label{sec-general-expansive}

In this section we prove some general properties of positive expansive semiflows on not necessarily compact metric spaces. These properties are presented in Proposition \ref{general-positive-expansive} bellow. 

Given a space $X$, a semiflow $\phi$ on $X$ and a subset $Y$ of $X$, we say that $Y$ is \emph{positive invariant} under $\phi$ if $\phi_t(Y)\subseteq Y$ for all $t$ in $\Rmc$. Moreover, we define $X_{\mathrm{sing}}$ as the set of singularities and  $X_{\mathrm{tail}}$ as the union of all tails of $\phi$ in $X$.

\begin{proposition}\label{general-positive-expansive}
Let $(X,d)$ be a metric space and let $\phi: X \times \Rmc \to X$ be a continuous semiflow that is positive expansive. Then:
\begin{enumerate}
\item[$(i)$]  For every $x$ in $X$, the pre-orbit $O^-_{\phi}(x)$ is unbranched. In particular, every tail of $\phi$ is unbranched. 
\item[$(ii)$] Every singularity of $\phi$  is isolated in $X$. In particular, $X_{\mathrm{sing}}$ is open in $X$.
\item[$(iii)$] Every tail of $\phi$  is open in $X$. In particular, $X_{\mathrm{tail}}$ is open in $X$.
\item[$(iv)$] $X\setminus (X_{\mathrm{sing}}\cup X_{\mathrm{tail}})$ is positive invariant under $\phi$.
\item[$(v)$] For every $x$ in $X$, if there exists $t$ in $\Rmc$ such that $\phi_t(x)$ is periodic, then $x$ is periodic.
\item[$(vi)$] For every $t$ in $\Rmc$, the map $\phi_t:X\setminus X_{\mathrm{tail}} \to X\setminus X_{\mathrm{tail}}$ is injective.
\end{enumerate} 
\end{proposition}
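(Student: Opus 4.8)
The plan is to exploit positive expansiveness only through pairs of points whose forward orbits eventually coincide, using one recurring device throughout. If two orbits merge at a point $p$ at time $\tau$ (so $\phi_t(y_1)=\phi_t(y_2)$ for all $t\ge\tau$), then for small $\eta>0$ the points $u_i:=\phi_{\tau-\eta}(y_i)$ both lie close to $p$ by continuity, their orbits coincide for $t\ge\eta$ and stay within $\delta$ for $t<\eta$; applying the definition with the identity reparametrization $s=\mathrm{id}\in H^+$ then forces $u_2=\phi_{t_0}(u_1)$ or $u_1=\phi_{t_0}(u_2)$ for some $t_0\in[0,\varepsilon[$, and applying $\phi_\eta$ yields $\phi_{t_0}(p)=p$. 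Choosing $\varepsilon$ smaller than any relevant period, a nonzero $t_0$ becomes impossible, so $t_0=0$ and the two orbits already coincide before $\tau$. I would first dispose of (iv), which is purely definitional: if $\phi_t(x)$ is a singularity then $x\in O_{\phi}^{-}(\phi_t(x))=\{\phi_t(x)\}$ forces $x$ to be that singularity, and if $\phi_t(x)$ lies in the tail of an end point $e$, then $\phi_{s+t}(x)=e$ puts $x$ in the same tail.

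For (i), assuming $\tau_x(y_1)=\tau_x(y_2)$ with $y_1\ne y_2$, the two orbits merge at $x$, and the device above drives the merge time down to $0$, giving $y_1=y_2$; the bookkeeping is to let $m$ be the infimum of times at which the orbits agree and rerun the argument at $\phi_m(y_1)$ to contradict minimality unless $m=0$. For (v), if $\phi_t(x)$ is periodic with period $P$, I would set $a^\ast:=\inf\{s\ge 0:\phi_s(x)\in C\}$, where $C$ is the closed orbit; applying the device at the entry point $\phi_{a^\ast}(x)$ (comparing $\phi_{a^\ast-\eta}(x)$ with its image under $\phi_P$) forces a point of $C$ to occur at a time strictly below $a^\ast$, contradicting minimality, so $a^\ast=0$ and $x\in C$ is periodic. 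Both arguments rely on choosing $\varepsilon<P$, so that any return time below $\varepsilon$ must vanish.

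Part (vi) then follows from (i) and (v). Suppose $\phi_t(x_1)=\phi_t(x_2)=q$ with $x_1\ne x_2$ and $x_1,x_2\notin X_{\mathrm{tail}}$. By (i) the map $\tau_q$ is injective, so $\tau_q(x_1)\ne\tau_q(x_2)$; writing $\tau_1<\tau_2\le t$, the identities $\phi_{t-\tau_i}(q)=q$ give $\phi_{\tau_2-\tau_1}(q)=q$, so $q$ is fixed or periodic. If $q$ is fixed it has nontrivial pre-orbit, hence is an end point, so $x_1,x_2\in X_{\mathrm{tail}}$, which is excluded. If $q$ is periodic of period $P$ with $\tau_2-\tau_1=kP$, then $\phi_{kP}(x_2)$ and $x_1$ share the value $\tau_q=\tau_1$, so by (i) they coincide and $x_1=\phi_{kP}(x_2)$; but (v) makes $x_2$ periodic, hence on the cycle through $q$, forcing $\tau_q(x_2)<P\le kP\le\tau_2=\tau_q(x_2)$, a contradiction. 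Thus $\phi_t$ is injective on $X\setminus X_{\mathrm{tail}}$.

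The hard part will be (ii) and (iii), where the flow is slow near singularities and end points. For (ii), assuming a singularity $x$ is non-isolated, I would take $y\ne x$ arbitrarily close to $x$: if the whole forward orbit of $y$ stays in $B(x,\delta)$, comparison with the constant orbit at $x$ forces $y\in\{x\}\cup O_{\phi}^{-}(x)=\{x\}$, a contradiction; if it escapes, I would compare $y$ with the slightly advanced point $\phi_\eta(y)$ using a reparametrization in $H^+$ that synchronizes the two before the escape time (possible precisely because motion near $x$ is slow), producing a point of small period near $x$ whose small orbit, compared once more with the constant orbit, collapses to $x$. Part (iii) is parallel: for $y$ in a tail with $\phi_\tau(y)=e$, nearby points land near the end point $e$ after time $\tau$, and the same trapping and slow-motion analysis places them in $O_{\phi}^{-}(e)$, so the tail is open. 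The crux, and the main obstacle, is the uniform control of orbit segments near the slow region together with the constraint that the reparametrizations be strictly increasing and surjective while keeping the compared orbits within $\delta$; this is exactly the mechanism that fails in Example 2 and must be quantified carefully.
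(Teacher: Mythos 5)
Your parts (iv), (v) and (vi) are sound: (iv) is the same definitional argument as the paper's, your (v) works because the reference point there genuinely has minimal period $P>0$ (so taking $\varepsilon<P$ legitimately kills a nonzero $t_0$), and your (vi) is essentially the paper's deduction from (i) and (v). But there are two genuine gaps. The first is in your central ``device'' as used for (i): you force $t_0=0$ by ``choosing $\varepsilon$ smaller than any relevant period,'' yet the relation you extract, $\phi_{t_0}(p)=p$, is vacuous when the merge point $p$ (or $x$ itself) is a rest point, since then \emph{every} $t_0$ satisfies it and no choice of $\varepsilon$ helps. This is exactly the configuration (i) must exclude --- two arcs flowing together into an end point, i.e.\ a branched tail --- so it cannot be dismissed. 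The paper forces $t_0=0$ by a different mechanism, with no periodicity input at all: from the standing hypothesis $\tau_x(w_1)=\tau_x(w_2)$ together with $w_2=\phi_{t_0}(w_1)$ (or symmetrically) one gets $\tau_x(w_1)=\tau_x(w_2)+t_0$, hence $t_0=0$. Your sketch gestures at ``bookkeeping'' with the infimum $m$ of agreement times, but the first-hitting-time identity is the missing ingredient, not the period bound.

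The second and decisive gap is (ii)--(iii), which you explicitly leave unresolved (``the crux \dots must be quantified carefully''). No quantification of slow motion near $x_0$ is needed, and the paper never performs one. For $x_0$ a singularity or end point, take $\delta_1$ from expansiveness with $\varepsilon=1$, let $\delta_2$ be a uniform-continuity constant for $\phi$ on the time window $[0,2]$ as in the paper's condition \eqref{cont-unif}, put $\delta_3:=\min\left\{\tfrac{\delta_1}{2},\delta_2\right\}$, and consider
$$V:=B(x_0,\delta_3)\cap \phi_1^{-1}(B(x_0,\delta_3)),$$
the set of points near $x_0$ whose time-$1$ image is again near $x_0$. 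For $y\in V$ and $x:=\phi_1(y)$, the fixed reparametrization $s(t)=2t$ on $[0,1]$, $s(t)=t+1$ for $t>1$, makes the two orbits coincide exactly for $t>1$, while for $t\leq 1$ both $\phi_t(x)$ and $\phi_{s(t)}(y)$ involve only times in $[0,2]$ and hence stay within $\tfrac{\delta_1}{2}$ of the \emph{rest point} $x_0$; so no control of escape times or long-time behaviour is ever required. Expansiveness gives $y=\phi_{t_0}(x)$ or $x=\phi_{t_0}(y)$, so $y$ (resp.\ $x$) is a rest point or periodic of period at most $2$, its entire orbit remains $\delta_1$-close to $x_0$, and a second application of expansiveness against the constant orbit at $x_0$ places it in $O^-_{\phi}(x_0)$. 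Thus $V\subseteq O^-_{\phi}(x_0)$, and since $x_0\in V$ one gets $O^-_{\phi}(x_0)=\bigcup_{t\in\Rmc}\phi_t^{-1}(V)$, which is open --- yielding (ii) and (iii) at once. Note the logic runs opposite to your plan: expansiveness is used to \emph{conclude} that nearby returning points lie in the pre-orbit (which is precisely what fails in Example 2, showing that semiflow non-expansive), rather than requiring you to first establish trapping or synchronization estimates. As submitted, your proposal proves (iv)--(vi) and, after repair, (i), but not (ii) or (iii).
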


In the following proof, we use $B(x,\delta)$ to denote the open ball in $X$ of center $x$ an radius $\delta$.

\begin{proof}[Proof of Proposition \ref{general-positive-expansive}]
We start with item $(i)$. Let $x$ be a point in $X$ and let $z_1,z_2$ be points in $O^-_{\phi}(x)$ such that $\tau_x(z_1)=\tau_x(z_2)$. Define
$$T:=\inf\{t\in \Rmc:\phi_t(z_1)=\phi_t(z_2)\}.$$
%$$T_1:=\inf\{t\in \Rmc:\phi_t(z_1)\in O^+_{\phi}(z_2)\text{ or }\phi_t(z_2)\in O^+_{\phi}(z_1)\}.$$
Note that $T\leq \tau_x(z_1)$. We claim that $T=0$. Indeed, assume this is not the case. Choose $\varepsilon:=1$ and let $\delta_1>0$ be the corresponding constant given by positive expansiveness of $\phi$.  
Since $\phi$ is continuous, there exists $\mu$ in $]0,T[$ such that $d(\phi_{t_1}(z_1),\phi_{t_2}(z_2))<\delta_1$ for all $t_1$ in $[\mu,T]$ and $t_2$ in $[\mu,T]$. If we define $w_i:=\phi_{\mu}(z_i)$ for $i=1,2$, then we have
$$d(\phi_t(w_1),\phi_t(w_2))<\delta_1$$
for all $t$ in $[0,T-\mu]$, and $\phi_t(w_1)=\phi_t(w_2)$ for all $t\geq T-\mu$. By expansiveness, there exists $t_0$ in $[0,1[$ such that $\phi_{t_0}(w_1)=w_2$ or $\phi_{t_0}(w_2)=w_1$. This implies $\tau_x(w_1)=\tau_x(w_2)+t_0$ or $\tau_x(w_2)=\tau_x(w_1)+t_0$. Since 
$$\phi_{T-\mu}(w_1)=\phi_T(z_1)=\phi_T(z_2)=\phi_{T-\mu}(w_2),$$
we also have $\tau_x(w_1)=\tau_x(w_2)$. This implies that $t_0=0$ hence $w_1=w_2$. Thus $\phi_{\mu}(z_1)=\phi_{\mu}(z_2)$, contradicting the definition of $T$. This proves that $T=0$, which implies $z_1=z_2$. We conclude that $\tau_x$ is injective, as desired. This proves that the pre-orbit $O^-_{\phi}(x)$ is unbranched. Since every tail is a pre-orbit, we have that every tail is unbranched. This completes the proof of $(i)$.\\
We now prove $(ii)$ and $(iii)$ simultaneously. Assume that $x_0$ is a singularity or an end point of $\phi$ in $X$. As before, let $\delta_1>0$ be the constant given by positive expansiveness of $\phi$ corresponding to $\varepsilon=1$. Since the restriction of $\phi$ to $X\times [0,2]$ is uniformly continuous, there exists $\delta_2>0$ such that for every $x_1,x_2$ in $X$ and $t_1,t_2$ in $[0,2]$ we have
\begin{equation}\label{cont-unif}
d(x_1,x_2)<\delta_2, |t_1-t_2|<\delta_2 \Rightarrow d(\phi_{t_1}(x_1),\phi_{t_2}(x_2))<\frac{\delta_1}{2}.
\end{equation}
Put $\delta_3:=\min \left\{\frac{\delta_1}{2},\delta_2\right\}$ and 
$$V:=B(x_0,\delta_3)\cap \phi_1^{-1}(B(x_0,\delta_3)).$$
Note that $V$ is an open neighbourhood of $x_0$. We claim that $V$ is contained in $O^-_{\phi}(x_0)$. Indeed, let $y$ be a point in $V$ and put $x:=\phi_1(y)$. Define $s:\Rmc\to \Rmc$ by
$$s(t)=\left\{\begin{array}{ll}
2t & \text{ if }t\in [0,1],\\
t+1 & \text{ if }t>1.
\end{array}\right.$$
On one hand, if $t$ is in $[0,1]$ then $s(t)$ is in $[0,2]$, hence by \eqref{cont-unif} we have
$$d(x_0,x)<\delta_3\leq \delta_2 \Rightarrow d(x_0,\phi_t(x))<\frac{\delta_1}{2}$$
and
$$d(x_0,y)<\delta_3\leq \delta_2 \Rightarrow d(x_0,\phi_{s(t)}(y))<\frac{\delta_1}{2}.$$
This implies that $d(\phi_t(x),\phi_{s(t)}(y))<\delta_1$ for all $t$ in $[0,2]$. On the other hand, for $t>1$ we have $\phi_{s(t)}(y)=\phi_{t+1}(y)=\phi_t(x)$. We conclude that $d(\phi_t(x),\phi_{s(t)}(y))<\delta_1$ for all $t$ in $\Rmc$. By positive expansiveness, we get that there exists $t_0$ in $[0,1[$ such that $y=\phi_{t_0}(x)$ or $x=\phi_{t_0}(y)$. In the first case we have that $y$ is a singularity or a periodic point of $\phi$ of period at most $1+t_0$. By \eqref{cont-unif} we have $d(\phi_t(y),x_0)<\delta_1$ for all $t$ in $\Rmc$, hence by positive expansiveness $y$ is in $O^-_{\phi}(x_0)$. This implies that $x$ is in $O^-_{\phi}(x_0)$ as desired. In the second case, we have that $x$ is a singularity or a periodic point of $\phi$ of period at most $1-t_0$. By \eqref{cont-unif} we get  $d(\phi_t(x),x_0)<\delta_1$ for all $t$ in $\Rmc$, and by positive expansiveness $x$ is in $O^-_{\phi}(x_0)$. This proves that $V$ is contained in $O^-_{\phi}(x_0)$. Since $x_0$ is in $V$, it follows that
$$\bigcup_{t \in \Rmc}\phi_t^{-1}(V)=O^-_{\phi}(x_0).$$
Since $V$ is open, this implies that $O^-_{\phi}(x_0)$ is open. If $x_0$ is a singularity, then $O^-_{\phi}(x_0)=\{x_0\}$ is open. Hence $x_0$ is isolated in $X$. Since $x_0$ is arbitrary, this implies that $X_{\mathrm{sing}}$ is open in $X$. If $x_0$ is an end point, we get that the tail $O^-_{\phi}(x_0)$ is open. Since $X_{\mathrm{tail}}$ is, by definition, a union of tails, we conclude that $X_{\mathrm{tail}}$ is open. This completes the proof of $(ii)$ and $(iii)$.\\
The proof of item $(iv)$ is straightforward. \\
In order to prove $(v)$, fix $t$ in $\Rmc$ and let $u$ be a point in $X$ such that $\phi_t(u)$ is periodic. Let $P>0$ be the period of $\phi_t(u)$, define
$$T_0:=\inf\{\alpha\in \Rmc:\phi_{\alpha}(u)\in O^+_{\phi}(\phi_t(u))\},$$
and put $w:=\phi_{T_0}(u)$. Then $O^+_{\phi}(w)=O^+_{\phi}(\phi_t(u))$. If $T_0>0$, choose $\beta:=\min\{P,T_0\}$. Then $\tau_w(\phi_{T_0-\beta}(u))=\beta=\tau_w(\phi_{P-\beta}(w))$. By $(i)$ we have $\phi_{T_0-\beta}(u)=\phi_{P-\beta}(w)\in O^+_{\phi}(w)=O^+_{\phi}(\phi_t(u))$. But this contradicts the definition of $T_0$. This proves that $T_0=0$, hence $u \in O^+_{\phi}(\phi_t(u))$. This implies that $u$ is periodic, as wanted. This proves $(v)$.\\
We now prove $(vi)$. Fix $t$ in $\Rmc$ and let $u_1,u_2$ be distinct points in $X\setminus X_{\mathrm{tail}}$  such that $\phi_t(u_1)=\phi_t(u_2)$. Define $r_i:=\tau_{\phi_t(u_1)}(u_i)$ for $i=1,2$. By $(i)$ we have $r_1\neq r_2$. Without loss of generality we can assume $r_2<r_1$. Then $\tau_{\phi_t(u_1)}(\phi_{r_1-r_2}(u_1))=r_2=\tau_{\phi_t(u_1)}(u_2)$. By $(i)$ we have $\phi_{r_1-r_2}(u_1)=u_2$, thus 
$$\phi_{r_1-r_2}(\phi_t(u_2))=\phi_{r_1-r_2}(\phi_t(u_1))=\phi_{t}(\phi_{r_1-r_2}(u_1))=\phi_t(u_2).$$ 
Since $u_2$ is not in $X_{\mathrm{tail}}$, this implies that $\phi_t(u_2)$ is periodic. By $(v)$ we have that $u_2$ is periodic. Since $\phi_{r_1-r_2}(u_1)=u_2$, by $(v)$ we have that $u_1$ is also periodic and $O_{\phi}^+(u_1)=O_{\phi}^+(u_2)$. Clearly, $\phi_t$ is injective in each closed orbit of $\phi$. In particular, $\phi_t$ is periodic in  $O_{\phi}^+(u_1)$, thus $u_1=u_2$. This proves $(vi)$ and completes the proof of the proposition.
\end{proof}

\noindent \begin{bf}Remark 1.\end{bf} Item $(vi)$ in Proposition \ref{general-positive-expansive} does not hold in general if we replace $X\setminus X_{\mathrm{tail}}$ by $X$. Indeed, Example 1 in section \ref{sect-1} gives an instance of a semiflow $\phi$ such that for every $t>0$, the map $\phi_t$ is not injective. \\

\noindent \begin{bf}Remark 2.\end{bf} It follows from Theorem \ref{Theorem_2} that if $(X,d)$ is a compact metric space and $\phi$ is a positive expansive semiflow on $X$, then for every $t$ in $\Rmc$ the map $\phi_t:X\setminus X_{\mathrm{tail}}\to  X\setminus X_{\mathrm{tail}}$ is surjective (since $X\setminus X_{\mathrm{tail}}$ consists of singularities and closed orbits). This does not hold in general if $X$ is not compact. For an example, consider the semiflow $\phi:\Rmc \times \Rmc\to \Rmc$ given by $\phi_t(x)=x+t$ for $x$ and $t$ in $\Rmc$. Then, for every $t>0$ we see that $\phi_t$ is not surjective.

\section{Proof of Theorem \ref{Theorem_2}}\label{sec-th-2}

Let $(X,d)$ be a compact metric space, and let $\phi:X\times \Rmc\to X$ be a positive expansive semiflow. Define $X_0$ as the set of singularities and  end points of $\phi$ in $X$. First, we prove that $X_0$ is finite. Assume this is not the case, and let $(x_n)_{n=1}^{\infty}$ be a sequence of distinct points in $X_0$.  Choose $\varepsilon:=1$ and let $\delta$ be the corresponding constant given by positive expansiveness of $\phi$. Since $X$ is compact, the sequence $(x_n)_{n=1}^{\infty}$ has a convergent subsequence. This implies that there exists distinct positive integers $n,m$ such that $d(x_n,x_{m})<\delta$. Since $x_n$ and $x_m$ are in $X_0$, we have $d(\phi_t(x_n),\phi_t(x_{m}))=d(x_n,x_m)<\delta$ for all $t$ in $\Rmc$. By positive expansiveness, there exists $t_0$ in $[0,1[$ such that $\phi_{t_0}(x_n)=x_m$ or  $\phi_{t_0}(x_m)=x_n$. In any case we get $x_n=x_m$, contradicting the fact that $x_n$ and $x_m$ are distinct. This proves that the set $X_0$ is finite. In particular, $X_{\mathrm{sing}}$ is finite and $X_{\mathrm{tail}}$ is a union of at most finitely many tails of $\phi$. Moreover, by Proposition \ref{general-positive-expansive}$(i)$ and $(ii)$, each tail is unbranched and  each singularity is isolated. \\
In order to complete the proof of Theorem \ref{Theorem_2}, it is enough to prove that $Y:=X\setminus (X_{\mathrm{sing}}\cup X_{\mathrm{tail}})$ is a union of at most finitely many closed orbits of $\phi$. If $Y$ is empty, then there is nothing to prove. Assume $Y$ is not empty. By Proposition \ref{general-positive-expansive}$(ii)$, $(iii)$ and $(iv)$, $Y$ is a closed positive invariant subset of $X$. In particular, $Y$ is compact.  Define
$$\widetilde{Y}:=\bigcap_{t\in \Rmc}\phi_t(Y).$$
Since  $\widetilde{Y}$ is the intersection of a family of compact sets with the finite intersection property, we have that $\widetilde{Y}$ is compact and not empty. Moreover, $\widetilde{Y}$ is positive invariant and for each $t$ in $\Rmc$, the map $\phi_t:\widetilde{Y}\to \widetilde{Y}$ is surjective. By Proposition \ref{general-positive-expansive}$(vi)$ and the fact that $\widetilde{Y}$ is compact, we conclude that for every $t$ in $\Rmc$, the map $\phi_t:\widetilde{Y}\to \widetilde{Y}$ is a homeomorphism. Hence, the map $\Phi:\widetilde{Y}\times \Rmc\to \widetilde{Y}$ defined for $y$ in $\widetilde{Y}$ and $t$ in $\mathbb{R}$ by
$$\Phi(y,t)=\left\{\begin{array}{ll}
\phi_t(y) \text{ if }t\geq 0,\\
\phi_{-t}^{-1}(y) \text{ if }t< 0,
\end{array}
\right.$$
is a continuous flow on $\widetilde{Y}$ (see, e.g., \cite[Lemma 2.4]{Sap81}). The flow $\Phi$ has no singularities and it is positive expansive in the sense of \cite[Definition~2.1]{Art14}. 
%Since $\phi$ is positive expansive, we have that $\Phi$ is a positive expansive flow in the sense of \cite[Section 2]{Art14}. 
By \cite[Theorem 4.2]{Art14} we get that $\widetilde{Y}$ is a union of finitely many closed orbits of $\Phi$. Thus,  $\widetilde{Y}$ is a union of closed orbits $C_1,\ldots, C_M$ of $\phi$.\\
We now prove that $Y=\widetilde{Y}$. Let $y$ be a point in $Y$, and consider the \emph{positive limite set}
$$L^+(y):=\{z\in Y: \phi_{t_n}(y)\to z \text{ for some sequence }(t_n)_{n=1}^{\infty} \text{ in }\Rmc \text{ with }t_n\to \infty\}.$$
Then, $L^+(y)$ is a closed, positively invariant, connected, not empty subset of~$Y$ (see, e.g., \cite[Theorems~3.4 and~3.5]{Sap81}). Note that for every $z$ in $L^+(y)$ and $t$ in $\Rmc$, there is a sequence $(t_n)_{n=1}^{\infty}$ in $[t,\infty[$ with $\phi_{t_n}(y)\to z$, and the  sequence $(\phi_{t_n-t}(y))_{n=1}^{\infty}$ has a subsequence converging to some point $z_t$ in $Y$ with $\phi_t(z_t)=z$. This proves that $L^+(y)$ is contained in $\widetilde{Y}$. Therefore, $L^+(y)=C_i$ for some $i$ in $\{1,\ldots,M\}$. Let $P$ be the period of $C_i$ and let $\delta'$ be the constant given by positive expansiveness of $\phi$ corresponding to $\varepsilon':=P$. We claim that there exists $T$ in $\Rmc$ such that $d(\phi_{t}(y),\phi_{P+t}(y))<\delta'$ for all $t$ in $[T,\infty[$. Indeed, if this were not the case, then there would exist a sequence $(T_n)_{n=1}^{\infty}$ in $\Rmc$ such that $T_n\to \infty$ and $d(\phi_{T_n}(y),\phi_{P+T_n}(y))\geq \delta'$. By compactness of $Y$ and continuity of $\phi_{P}$, this implies the  existence of a point $z_0$ in $L^+(y)=C_i$ with $d(z_0,\phi_{P}(z_0))\geq \delta'>0$. This is impossible since $z_0=\phi_{P}(z_0)$. Thus, there exists $T$ in $\Rmc$ such that $d(\phi_{t}(y),\phi_{P+t}(y))<\delta'$ for all $t$ in $[T,\infty[$. By expansiveness, this implies that there exists $\mu$ in $[0,P[$ such that $\phi_{\mu}(\phi_{T}(y))=\phi_{P+T}(y)$ or  $\phi_{\mu}(\phi_{P+T}(y))=\phi_{T}(y)$. Thus $\phi_{\mu+T}(y)$ or $\phi_{T}(y)$ is periodic. By Proposition \ref{general-positive-expansive}$(v)$ we get that $y$ is  periodic. This implies that $y$ is in $\widetilde{Y}$ as desired. This proves that $Y=\widetilde{Y}$ and completes the proof of Theorem \ref{Theorem_2}.

\section*{Acknowledgements} The authors would like to thank C.~A.~Morales for sharing with them a copy of the preprint \cite{LLM20}.  S.~Herrero was partially supported by Proyecto VRIEA-PUCV 039.344/2021. N.~Jaque was partially supported by ANID/CONICYT FONDECYT Iniciaci\'on grant 11190815.

\bibliographystyle{alpha}
\bibliography{references}
\end{document}